\newtheorem{lem}{Lemma}
\newtheorem{cor}{Corollary}
\newtheorem{thm}{Theorem}
\newtheorem{conj}{Conjecture}
\def\a{0cm} 
\def\b{1cm} \def\B{1.5cm}
\def\c{2cm} \def\C{2.5cm}
\def\d{3cm}
 \def\N{13.5cm}
\def\sizePoint{4pt}
\newcommand{\point}[2]{\fill (canvas cs:x=#1,y=#2) circle (\sizePoint); }
\def\bb#1{\bm{\mathrm{#1}}}
\def\pex{\bb{pex}}
\def\cyc{\bb{cyc}}
\def\pcyc{\bb{pcyc}}
\def\exc{\bb{exc}}
\def\des{\bb{des}}
\def\fix{\bb{fix}}
\def\N{\mathbb{N}}
\def\pdes{\bb{des}}
\def\mm{\bb}
\newcommand{\stkout}[1]{\ifmmode\text{\sout{\ensuremath{#1}}}\else\sout{#1}\fi}
\title{Transformation à la Foata \\
for special kinds of descents and
excedances }
\author[]{Jean-Luc Baril and Sergey Kirgizov}
\affil[]{LIB, Universit{\'e} Bourgogne Franche-Comt{\'e}\\
B.P. 47 870, 21078 Dijon-Cedex, France\\
\href{mailto:barjl@u-bourgogne.fr,sergey.kirgizov@u-bourgogne.fr}{\tt \{barjl,sergey.kirgizov\}@u-bourgogne.fr}
}
\begin{document}
\maketitle

\begin{abstract} A pure excedance in a permutation
$\pi=\pi_1\pi_2\ldots \pi_n$ is a position $i<\pi_i$
such that  there is no $j<i$ with $i\leq \pi_j<\pi_i$.  We present a
one-to-one correspondence on the symmetric group that transports 
pure excedances to descents of special kind. As a byproduct, we prove that the
popularity of pure excedances equals those of pure descents on
permutations, while their distributions are different.

\end{abstract}
{\bf Keywords:}  Permutation, statistic, distribution, popularity,
descent, excedance, cycle.

\section{Introduction and notations}
The distribution of the number of descents has been widely studied on
several classes of combinatorial objects such as permutations
\cite{Mac}, cycles \cite{Eli, Eli1}, and words \cite{Bar,Foa1}. Many
interpretations of this statistic appear in several fields as Coxeter
groups \cite{Ber,Gar} or lattice path theory \cite{Ges}. One of the
most famous result involves the {\it Foata fundamental
  transformation}~\cite{Foa} to establish a one-to-one correspondence
between descents and excedances on permutations. This bijection
provides a more straightforward proof than those of MacMahon
\cite{Mac} for the equidistribution of these two Eulerian statistics.

In this paper, we
present a bijection {\it à la Foata} on the symmetric group that exchanges pure excedances
with special kind of descents defined as a mesh pattern $p_2$~\cite{Bra} (see below for the definition of this pattern).
Then, we deduce that the popularities (but not the distributions) of
pure descents \cite{Bar0} and pure excedances are the same. This common popularity is
given by the generalized Stirling number $n!\cdot (H_n-1)$ (see
\href{https://oeis.org/A001705}{A001705} in \cite{Sloa}) where
$H_n=\sum_{k=1}^n\frac{1}{k}$ is the $n$-th harmonic number. Finally,
we conjecture the existence of a bijection on the symmetric group
that exchanges pure excedances and $p_2$ while preserving 
the number of cycles.

Let $S_n$ be the set of permutations of length $n$, {\it i.e.}, all
bijections from $[n]=\{1,2,\ldots,n\}$ into itself. The one-line
representation of a permutation $\pi\in S_n$ is $\pi=\pi_1\pi_2\ldots
\pi_n$ where $\pi_i=\pi(i)$, $1\leq i\leq n$. For $\sigma\in S_n$, the
{\it product} $\sigma\cdot \pi$ is the permutation
$\sigma(\pi_1)\sigma(\pi_2)\ldots \sigma(\pi_n)$. A $\ell${\it
-cycle} $\pi=\langle i_1,i_2,\ldots, i_\ell\rangle$ in $S_n$ is a $n$-length
permutation satisfying $\pi(i_1)=i_2, \pi(i_2)=i_3, \ldots,
\pi(i_{\ell-1})=i_\ell, \pi(i_\ell)=i_1$ and $\pi(j)=j$ for $j\in [n]\backslash
\{i_1,i_2, \ldots, i_\ell\}$. For $1\leq k\leq n$, we denote by $C_{n,k}$
the set of all $n$-length permutations admitting a decomposition in a
product of $k$ disjoint cycles. The set $C_{n,k}$ is counted by the
signless Stirling numbers of the first kind $c(n,k)$ defined by
$$c(n,k)=(n-1)\;c(n-1,k)+c(n-1,k-1)$$ where $c(n,k)=0$ if $n=0$ or
$k=0$, except $c(0,0)=1$ (see \cite{Sta,Wil} and  \href{https://oeis.org/A132393}{A132393} in
\cite{Sloa}). These numbers
also enumerate $n$-length permutations $\pi$ having $k$ {\it left-to-right maxima},
{\it i.e.}, positions $i\in [n]$ such that $\pi_j<\pi_i$ for $j<i$ (see
\cite{Sta}), and permutations $\pi\in S_n$ with $k-1$ {\it pure
descents}, {\it i.e.}, descents $\pi_i>\pi_{i+1}$ where there is no $j<i$
such that $\pi_j\in [\pi_{i+1},\pi_i]$ (see \cite{Bar0}). Note that a
pure descent can be viewed as an occurrence of the mesh pattern
$(21,L_1)$ where $L_1=\{1\}\times[0,2]\cup \{(0,1)\}$. Indeed, for a
$k$-length permutation $\sigma$ and a subset $R\subseteq
[0,k]\times[0,k]$, an occurrence of the mesh pattern $(\sigma,R)$ in a
permutation $\pi$ is an occurrence of $\sigma$ in $\pi$ with the 
additional  restriction that no element of $\pi$ lies inside the shaded
regions defined by $R$, where $(i,j)\in R$ means the square having
bottom left corner $(i,j)$  in the graphical representation $\{(i,
\sigma_i), i\in[k]\}$ of $\sigma$. For instance, an occurrence of the
mesh pattern $p_1$ in Figure \ref{fig1}  corresponds to an occurrence of
a pure descent. See \cite{Bra} for a more detailed definition of 
 mesh patterns.

  Regarding this interpretation of pure descents in terms of mesh
patterns, we define other kinds of descents by the mesh patterns
$p_i=(21,L_i)$, $p'_i=(21,R_i)$ with $L_i=\{1\}\times[0,2]\cup \{(0,i)\}$
and $R_i=\{1\}\times[0,2]\cup \{(2,i)\}$ for $0\leq i\leq 2$. Modulo the
trivial symmetries on permutations (reverse and complement), it is
straightforward to see that $p_0$, $p_1$, and $p_2$ are respectively in
the same distribution class as $p'_2$, $p'_1$ and $p'_0$. 
Then, we deal
with only mesh patterns $p_i$, $i\in[0,2]$. We refer to Figure
\ref{fig1} for a graphical illustration. On the other hand, we
define a {\it pure excedance} as an occurrence of an excedance, {\it i.e.}
$\pi_i>i$, with the additional restriction that there is no point
$(j,\pi_j)$ such that $1\leq j\leq i-1$ with $i\leq \pi_j<\pi_i$.
Although such a pattern (called $pex$) is not a mesh pattern, we can represent it
graphically as shown in Figure \ref{fig1}.

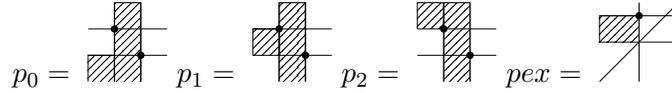
\begin{figure}[ht]
     \begin{center}
     $p_0=~$\scalebox{0.35}    {\begin{tikzpicture}
                 \draw [thick] (\b,\a) -- (\b,\d);
                 \draw [thick] (\c,\a) -- (\c,\d);
                 \draw [thick] (\a,\b) -- (\d,\b);
                 \draw [thick] (\a,\c) -- (\d,\c);


         \draw [fill=lightgray,pattern=north east
lines](\b,\a)--(\b,\d)--(\c,\d)--(\c,\a);
               \draw [fill=lightgray,pattern=north east
lines](\a,\a)--(\a,\b)--(\b,\b)--(\b,\a);
         \point{\b}{\c};\point{\c}{\b}
             \end{tikzpicture}}
             $p_1=~$\scalebox{0.35}    {\begin{tikzpicture}
                 \draw [thick] (\b,\a) -- (\b,\d);
                 \draw [thick] (\c,\a) -- (\c,\d);
                 \draw [thick] (\a,\b) -- (\d,\b);
                 \draw [thick] (\a,\c) -- (\d,\c);


         \draw [fill=lightgray,pattern=north east
lines](\b,\a)--(\b,\d)--(\c,\d)--(\c,\a);
               \draw [fill=lightgray,pattern=north east
lines](\a,\b)--(\a,\c)--(\b,\c)--(\b,\b);
         \point{\b}{\c};\point{\c}{\b}
             \end{tikzpicture}}
             $p_2=~$\scalebox{0.35}    {\begin{tikzpicture}
                 \draw [thick] (\b,\a) -- (\b,\d);
                 \draw [thick] (\c,\a) -- (\c,\d);
                 \draw [thick] (\a,\b) -- (\d,\b);
                 \draw [thick] (\a,\c) -- (\d,\c);


         \draw [fill=lightgray,pattern=north east
lines](\b,\a)--(\b,\d)--(\c,\d)--(\c,\a);
               \draw [fill=lightgray,pattern=north east
lines](\a,\c)--(\a,\d)--(\b,\d)--(\b,\c);
         \point{\b}{\c};\point{\c}{\b}
             \end{tikzpicture}}
             $pex=~$\scalebox{0.35}    {\begin{tikzpicture}
                 \draw [thick] (\B,\a) -- (\B,\d);
                 \draw [thick] (\a,\B) -- (\d,\B);
                 \draw [thick] (\a,\C) -- (\d,\C);
                 \draw [thick] (\a,\a) -- (\d,\d);
         \draw [fill=lightgray,pattern=north east
lines](\a,\B)--(\a,\C)--(\B,\C)--(\B,\B);
         \point{\B}{\C};
             \end{tikzpicture}}
                 \end{center}
     \caption{Illustration of the mesh patterns $p_0$, $p_1$, $p_2$ and
$pex$; $p_1$  and $pex$ correspond respectively to a pure descent and a pure
excedance.}
\label{fig1}
\end{figure}

A {\it statistic}  is an integer-valued function from a set
$\mathcal{A}$ of $n$-length permutations (we use the boldface to denote statistics).
For a pattern $p$, we define the pattern statistic ${\bf p}:\mathcal{A}\rightarrow\N$ where the image ${\bf p}~\pi$ of $\pi\in\mathcal{A}$ by ${\bf p}$ is the number of occurrences of $p$ in $\pi$. The {\it popularity} of $p$ in $\mathcal{A}$ is the total
number of occurrences of $p$ over all objects of $\mathcal{A}$, that is
$\sum_{a\in\mathcal{A}}{\bf p}~a$ (see \cite{Bon} for instance).
Below, we present statistics that we use throughout the paper:
$$\begin{array}{ll}
\mbox\exc~\pi&= \mbox{number of excedances in } \pi,\\
\mbox\pex~\pi&= \mbox{number of pure excedances in } \pi,\\
\mbox\des~\pi&= \mbox{number of descents in } \pi,\\
\mbox\pdes_i~\pi&= \mbox{number of patterns $p_i$ in } \pi,~0\leq i\leq 2,\\
\mbox\fix~\pi&= \mbox{number of fixed points in } \pi,\\
\mbox\cyc~\pi&= \mbox{number of cycles in the decomposition of } \pi,\\
\mbox\pcyc~\pi&= \mbox{number of pure cycles (i.e. cycles of length
at least two) in } \pi,\\
                    &= \cyc~\pi - \fix~\pi\\
\end{array}$$

 We organize the paper as follows. In Section 2, we focus on patterns $p_i$, $0\leq i\leq 2$. We prove that the
statistics $\pdes_0$ and $\pdes_1$ are equidistributed by giving
algebraic and bijective proofs. Next, we provide the bivariate exponential
generating function for the distribution of $p_2$, and we deduce that
$p_2$ has the same popularity as $p_0$ and $p_1$, without having the
same distribution. In Section 3, we present a bijection on  $S_n$ that
transports pure excedances into patterns $p_2$. Notice that the Foata's
first transformation \cite{Foa} is not a candidate for such a bijection.
As a consequence, pure descents and pure excedances are
equipopular on $S_n$, but they do not have the same distribution.
Combining all these results, we deduce that patterns $p_i$, $0\leq i\leq 2$,
and $pex$ are equipopular on the symmetric group $S_n$.
Finally we present two conjectures about the equidistribution of $(\mm{cyc},\des_2)$ and 
$(\mm{cyc}, \pex)$, and that of $(\des,\pdes_2)$ 
and $(\exc, \pex)$.

\section{The statistics $\pdes_i$, $0\leq i\leq 2$}

For $0\leq i\leq 2$, let $A^i_{n,k}$ be the set of $n$-length
permutations having $k$ occurrences of $p_i$, and 
denote by
$a^i_{n,k}$ its
cardinality. Let $A^i(x,y)$ be the bivariate exponential generating
function  $\sum_{n = 0}^\infty\sum_{k=0}^{n-1}
a^i_{n,k}\frac{x^n}{n!}y^k$. In~\cite{Bar0,Kit}, it is proved that
$a^1_{n,k}$ equals the signless Stirling numbers of the first kind
$c(n,k+1)$ (see \href{https://oeis.org/A132393}{A132393} in
\cite{Sloa}). Indeed, a permutation  $\sigma\in A^1_{n,k}$ can be
uniquely obtained from an $(n-1)$-length permutation $\pi$  by one of the
two following constructions:

\begin{enumerate}[(i)]
\item  if $\pi\in A^1_{n-1,k-1}$, then we increase by one all values of
$\pi$ greater than or equal to $\pi_{n-1}$, and we add $\pi_{n-1}$ at the end;

\item  if $\pi\in A^1_{n-1,k}$, then we increase by one all values of
$\pi$ greater than or equal to a given value $x\leq n$, $x\neq \pi_{n-1}$ and
  we add $x$ at the end.
\end{enumerate}

Then, we deduce the recurrence relation
$a^1_{n,k}=a^1_{n-1,k-1}+(n-1)a^1_{n-1,k}$ with $a^1_{n,0}=(n-1)!$ for
$n\geq 1$, $a^1_{0,0}=1$ and the bivariate exponential generating
function is
$$A^1(x,y)=\frac{1}{y(1-x)^y}-\frac{1}{y}+1$$ which proves that
$a^1_{n,k}=c(n,k+1)$.

\medskip 
Below, we prove that $a^1_{n,k}$ also counts
$n$-length permutations having $k$ occurrences of the pattern $p_0$.

  \begin{thm} The number $a_{n,k}^0$ of $n$-length permutations having
$k$ occurrences of pattern $p_0$ equals $a^1_{n,k}=c(n,k+1)$.
  \label{thm1}
  \end{thm}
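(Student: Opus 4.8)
The plan is to mimic the recursive construction that the authors already used for $p_1$, establishing a bijection between $A^0_{n,k}$ and $A^0_{n-1,k-1}\cup A^0_{n-1,k}$ (with appropriate multiplicity) that yields the same recurrence $a^0_{n,k}=a^0_{n-1,k-1}+(n-1)\,a^0_{n-1,k}$ together with the initial conditions $a^0_{n,0}=(n-1)!$ and $a^0_{0,0}=1$. Since the recurrence and initial data uniquely determine the sequence, and $a^1_{n,k}$ satisfies them, this forces $a^0_{n,k}=a^1_{n,k}=c(n,k+1)$.

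First I would unwind the definition of $p_0=(21,L_0)$ with $L_0=\{1\}\times[0,2]\cup\{(0,0)\}$: an occurrence of $p_0$ at position $i$ is a descent $\pi_i>\pi_{i+1}$ such that there is no $j<i$ with $\pi_j<\pi_{i+1}$ — that is, a descent whose smaller value $\pi_{i+1}$ is smaller than every entry strictly to its left (equivalently, $\pi_{i+1}$ is a left-to-right minimum occurring in a descent, but with the descent-bottom reading). I would then analyze how appending a new last entry to an $(n-1)$-length permutation $\pi$ affects the count of $p_0$-occurrences. The key observation is that occurrences of $p_0$ strictly inside $\pi$ are preserved under the standard value-shifting operation (inserting a new value at the end, shifting up all entries $\ge$ that value), because $p_0$ only constrains entries to the \emph{left} of the descent; appending at the very end cannot create or destroy an internal occurrence except possibly at the new last pair $(\pi_{n-1}',\,x)$. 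So I would split into two cases exactly as in the $p_1$ argument: case (i), the new last pair forms a $p_0$-occurrence — this happens precisely when we choose the new last value to be $1$ (so it is smaller than everything to its left) and the old last entry is $>1$, giving one way and raising the $p_0$-count by one; case (ii), the new last pair is not a $p_0$-occurrence — this covers all other choices of the last value $x\in[n]$, of which there are $n-1$ (everything except the one distinguished choice), leaving the count unchanged. One must double-check that this decomposition is a genuine bijection onto $A^0_{n-1,k-1}\sqcup\big((n-1)\text{ copies of }A^0_{n-1,k}\big)$, i.e. that the construction is reversible: given $\sigma\in A^0_{n,k}$, delete the last entry and shift down, recording which case occurred.

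The main obstacle I anticipate is getting the case split in (i)/(ii) exactly right — in particular pinning down precisely which single choice of appended value creates a new $p_0$ at the end, and verifying that no \emph{internal} $p_0$-occurrence is ever created or destroyed by the append-and-shift operation (the asymmetry between $p_0$, $p_1$, $p_2$ lives entirely in where the shaded cell sits relative to the descent, so the bookkeeping differs in a subtle way from the $p_1$ case even though the final recurrence coincides). Once that is nailed down, deriving $a^0_{n,0}=(n-1)!$ is immediate: a permutation with no $p_0$-occurrence is one in which no descent has its bottom value being a left-to-right minimum, which by the recurrence with $k=0$ reduces to $a^0_{n,0}=(n-1)\,a^0_{n-1,0}$, and a direct check gives $a^0_{1,0}=1$. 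Alternatively, and perhaps cleaner for the write-up, I would skip the initial-condition verification and instead just exhibit the full bijection $A^0_{n,k}\leftrightarrow C_{n,k+1}$ (or to permutations with $k+1$ left-to-right maxima) directly, using the recursive structure to translate the "new last value" choice into the standard insertion encoding of Stirling numbers of the first kind; this makes the equality $a^0_{n,k}=c(n,k+1)$ transparent without separately handling boundary cases, though it requires slightly more care in describing the bijection explicitly.
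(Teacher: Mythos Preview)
Your proposal is correct and takes essentially the same approach as the paper: build every $\sigma\in A^0_{n,k}$ by appending a value at the end of an $(n-1)$-permutation, observe that the appended value creates a new $p_0$ exactly when it equals $1$ (case (i)) and otherwise leaves the count unchanged (case (ii), $n-1$ choices), and conclude $a^0_{n,k}=a^0_{n-1,k-1}+(n-1)a^0_{n-1,k}$ with $a^0_{n,0}=(n-1)!$. The paper's proof is exactly this, stated slightly more tersely.
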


  \begin{proof}
An $n$-length permutation $\sigma\in A^0_{n,k}$ can be uniquely obtained
from an $(n-1)$-length permutation $\pi$  by one of the two following
constructions:

\begin{enumerate}[(i)]
\item
  if $\pi\in A^0_{n-1,k-1}$, then we increase by one all values of
$\pi$ and we add $1$ at the end;

\item
  if $\pi\in A^0_{n-1,k}$,  then we increase by one all values of
$\pi$  greater than or equal to a given value $x$,  $1 < x \leq n$, and we add
  $x$ at the end.
\end{enumerate}

   We deduce the recurrence relation
$a^0_{n,k}=a^0_{n-1,k-1}+(n-1)a^0_{n-1,k}$ with the initial condition
   $a^0_{n,0}=(n-1)!$, and then $a^0_{n,k}=a^1_{n,k}=c(n,k+1)$.
  \end{proof}

Now, we focus on the distribution of the pattern $p_2$. Table~\ref{tab1} 
provides exact values for small sizes.

\begin{thm} We have  $$A^2(x,y)=\frac{e^{x(1-y)}}{(1-x)^y},$$ and the
general term $a^2_{n,k}$ satisfies for $n\geq 2$ and $1\leq k\leq\lfloor\frac{n}{2}\rfloor$
$$a^2_{n,k}=na^2_{n-1,k}+(n-1)a^2_{n-2,k-1}-(n-1)a^2_{n-2,k}$$ with the
initial conditions $a^2_{n,0}=1$ and $a^2_{n,k}=0$ for $n\geq 0$ and
$k>\lfloor\frac{n}{2}\rfloor$ (see Table~\ref{tab1} and the triangular table
\href{https://oeis.org/A136394}{A136394} in \cite{Sloa}).
\label{thm2}
\end{thm}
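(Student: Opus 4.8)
The plan is to obtain $A^2(x,y)$ by peeling off the last block of a permutation --- its maximal entry $n$ together with the (possibly empty) factor lying to the right of $n$ --- and then to read the recurrence off from the resulting differential equation. Recall that an occurrence of $p_2$ at position $i$ in $\pi=\pi_1\cdots\pi_n$ is a consecutive descent $\pi_i>\pi_{i+1}$ admitting no $j<i$ with $\pi_j>\pi_i$; equivalently, $\pi_i$ is a left-to-right maximum and $\pi_i>\pi_{i+1}$. Write $\pi=\alpha\,n\,\beta$ with $n$ in position $p$, $\alpha=\pi_1\cdots\pi_{p-1}$, $\beta=\pi_{p+1}\cdots\pi_n$, and let $\alpha'$ be the standardisation of $\alpha$. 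The first step is to prove
$$\pdes_2\,\pi=\pdes_2\,\alpha'+[\,\beta\neq\emptyset\,].$$

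Deleting $n$ and $\beta$ affects neither the entries at positions $<p$ nor the ``no larger entry to the left'' condition attached to those positions, so the occurrences of $p_2$ at positions $i<p-1$ of $\pi$ are precisely the occurrences of $p_2$ in $\alpha'$ (and $\alpha'$ has no occurrence at its last position); there is no occurrence at position $p-1$ of $\pi$, since that would require $\pi_{p-1}>n$; there is an occurrence at position $p$ exactly when $\beta\neq\emptyset$, because then $n=\pi_p>\pi_{p+1}$ while nothing to the left of $n$ exceeds it; and no position $i>p$ can carry an occurrence, as $\pi_p=n>\pi_i$ violates the left-to-right maximum requirement. This establishes the displayed identity.

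Since $\alpha'$ ranges over all permutations, $\beta$ over all linear orderings of the complementary set of entries, and these choices are independent, the decomposition translates into the functional equation
$$\frac{\partial}{\partial x}A^2(x,y)=A^2(x,y)\left(1+y\,\frac{x}{1-x}\right),$$
where the factor on the right encodes the reinsertion of the new maximum followed by the suffix $\beta$, weighted by $y$ when $\beta$ is non-empty (contributing $\sum_{b\ge1}b!\,x^b/b!=x/(1-x)$) and by $1$ when $\beta$ is empty; here $A^2(x,y)=\sum_{n,k}a^2_{n,k}\frac{x^n}{n!}y^k$. Rewriting this as $(1-x)\,\partial_x A^2=(1-x+xy)\,A^2$ with $A^2(0,y)=1$ and integrating $\partial_x\log A^2=(1-y)+y/(1-x)$ yields $\log A^2=x(1-y)-y\log(1-x)$, that is, $A^2(x,y)=e^{x(1-y)}/(1-x)^y$, as claimed. (Equivalently, the identity just obtained shows that $\pdes_2$ is equidistributed with the number of cycles of length at least two: Foata's fundamental transformation turns left-to-right maxima into cycle leaders and their delimited blocks into cycles, and then $A^2(x,y)=e^x\exp\!\big(y\sum_{j\ge2}x^j/j\big)$ is the usual cycle-index expansion.)

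Finally, comparing the coefficients of $\frac{x^n}{n!}y^k$ on both sides of $(1-x)\,\partial_x A^2=(1-x+xy)\,A^2$ gives $a^2_{n+1,k}-n\,a^2_{n,k}=a^2_{n,k}-n\,a^2_{n-1,k}+n\,a^2_{n-1,k-1}$, and shifting $n\mapsto n-1$ produces exactly $a^2_{n,k}=n\,a^2_{n-1,k}+(n-1)\,a^2_{n-2,k-1}-(n-1)\,a^2_{n-2,k}$ for $n\ge2$; the boundary values $a^2_{n,0}=1$ (the identity is the unique $p_2$-avoider) and $a^2_{n,k}=0$ for $k>\lfloor n/2\rfloor$ follow from the decomposition by an easy induction (when $\beta\neq\emptyset$ one has $|\alpha'|\le n-2$, whence $\pdes_2\,\pi\le\lfloor n/2\rfloor$). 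The only delicate point is the first step: one must verify carefully that removing $n$ together with $\beta$ neither creates nor destroys a $p_2$-occurrence except for the single occurrence located at the position of $n$, and that the passage to a product of exponential generating functions correctly records the $y$-weight of the empty versus non-empty suffix; everything else is routine.
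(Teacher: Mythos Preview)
Your proof is correct and, in fact, cleaner than the paper's. The approaches differ in the choice of decomposition: the paper conditions on the \emph{last value} $\sigma_n$ (splitting into the cases $\sigma_n=n$ and $\sigma_n<n$, with auxiliary sequences $u_{n,k},v_{n,k}$), derives the three-term recurrence first, and only then passes to the differential equation and the closed form. You instead condition on the \emph{position of $n$}, writing $\pi=\alpha\,n\,\beta$ and proving the key identity $\pdes_2\pi=\pdes_2\alpha'+[\beta\neq\emptyset]$; this yields the first-order ODE $\partial_x\log A^2=(1-y)+y/(1-x)$ immediately, from which both the closed form and the recurrence drop out. Your route avoids the intermediate two-sequence system and makes the structure more transparent; as a bonus, your parenthetical remark that $p_2$-occurrences correspond under Foata's transformation to cycles of length $\geq 2$ already contains the content of the paper's Theorem~3, which there requires a separate argument. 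The only cost is that the paper's recursion-first approach perhaps makes the combinatorial meaning of each term in the three-term recurrence slightly more visible.
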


\begin{proof}
 Let $\sigma = \sigma_1 \sigma_2 \ldots \sigma_n$ denote a permutation
 of length $n$ having $k$ occurrences of pattern $p_2$. Let $u_{n,k}$
 (resp. $v_{n,k}$) be the number of such permutations satisfying
 $\sigma_n = n$ (resp. $\sigma_n < n$). Obviously, we have
 $$a^2_{n,k}=u_{n,k}+v_{n,k}.$$ A permutation $\sigma$ with $\sigma_n = n$
 can be uniquely constructed from an $(n-1)$-length
 permutation $\pi$ as $\sigma = \pi_1 \pi_2 \ldots
 \pi_{n-1} n$. No new occurrences of $p_2$ are created, and we
 obtain
 $$u_{n,k}=a^2_{n-1,k}.$$
 A permutation $\sigma$ satisfying $\sigma_n < n$ can be uniquely
 obtained from an $(n-1)$-length permutation $\pi$ by adding a value
 $x < n$ on the right side of its one-line notation, after increasing
 by one all the values greater than or equal to $x$.  This construction
 creates a new pattern $p_2$ if and only if $\pi$ ends with $n-1$.
 Thus, we deduce $$v_{n,k}=(n-1)u_{n-1,k-1}+(n-1)v_{n-1,k}.$$
 Combining the equations, we obtain for $n\geq 2$ and $k\geq 1$
$$a^2_{n,k}=na^2_{n-1,k}+(n-1)a^2_{n-2,k-1}-(n-1)a^2_{n-2,k},$$ which
implies the following differential equation
$$ \frac{ \partial A^2(x,y)}{\partial x} =(y-1)xA^2(x,y)+ \frac{
  \partial \left( xA^2(x,y) \right) }{\partial x} \text{, where}~
A^2(x,0)=1. $$ A simple calculation provides the claimed closed form
for the generating function $A^2(x,y)$.
\end{proof}

\begin{cor} For $0\leq i\leq 2$, the patterns $p_i$  are equipopular on
$S_n$. Their popularity is  given by the generalized Stirling number
$n!\cdot (H_n-1)$ (see \href{https://oeis.org/A001705}{A001705} in
\cite{Sloa}) where $H_n=\sum_{k=1}^n\frac{1}{k}$ is the $n$-th harmonic
number.
\label{cor1}
\end{cor}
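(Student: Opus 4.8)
The plan is to extract the popularity of each pattern $p_i$ from the bivariate exponential generating functions already established, namely $A^0(x,y)=A^1(x,y)=\frac{1}{y(1-x)^y}-\frac{1}{y}+1$ (Theorem~\ref{thm1}) and $A^2(x,y)=\frac{e^{x(1-y)}}{(1-x)^y}$ (Theorem~\ref{thm2}). The popularity of $p_i$ on $S_n$ is $\sum_{k\geq 0} k\,a^i_{n,k}$, which is exactly $n!$ times the coefficient of $x^n$ in $\left.\frac{\partial A^i(x,y)}{\partial y}\right|_{y=1}$. So the whole statement reduces to computing two partial derivatives at $y=1$ and reading off a Taylor coefficient.

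First I would handle $A^2$. Writing $A^2(x,y)=e^{x(1-y)}(1-x)^{-y}=\exp\bigl(x(1-y)-y\ln(1-x)\bigr)$, differentiating in $y$ gives $\frac{\partial A^2}{\partial y}=\bigl(-x-\ln(1-x)\bigr)A^2(x,y)$. Setting $y=1$ makes $A^2(x,1)=\frac{1}{1-x}$, so $\left.\frac{\partial A^2}{\partial y}\right|_{y=1}=\frac{-x-\ln(1-x)}{1-x}$. Since $-\ln(1-x)=\sum_{m\geq 1}\frac{x^m}{m}$, the numerator is $\sum_{m\geq 2}\frac{x^m}{m}$, and dividing by $(1-x)$ the coefficient of $x^n$ is $\sum_{m=2}^n\frac{1}{m}=H_n-1$. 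Multiplying by $n!$ yields popularity $n!(H_n-1)$, as claimed. For $A^0=A^1$, one can either repeat the computation — $\frac{\partial}{\partial y}\bigl(\frac{1}{y}(1-x)^{-y}\bigr)$ evaluated at $y=1$, after the $\frac{1}{y}$ and the $(1-x)^{-y}$ terms are differentiated, again produces $\frac{-x-\ln(1-x)}{1-x}$ — or simply invoke the known identity that the Stirling triangle $c(n,k+1)$ has $\sum_k k\,c(n,k+1)=\sum_{j}(j-1)c(n,j)=n!\,H_n-n!=n!(H_n-1)$, which follows from $\sum_j j\,c(n,j)=$ (number of cycles summed over $S_n$) and the standard fact $\sum_{\pi\in S_n}\cyc\,\pi=n!\,H_n$.

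The computation is entirely routine; there is no real obstacle, only two points that need a clean word of justification. One is that passing from ``distribution generating function'' to ``popularity'' via $\partial_y|_{y=1}$ is the standard marking-variable differentiation, valid because $A^i(x,y)=\sum_{n,k}a^i_{n,k}\frac{x^n}{n!}y^k$ converges as a formal power series and differentiation commutes with the (finite, for each fixed $n$) inner sum. The other is the series expansion of $-\ln(1-x)$ and the Cauchy-product extraction of the $x^n$ coefficient after dividing by $1-x$; both are elementary. I would present the $A^2$ computation in full (two lines), remark that the same two lines applied to $A^0=A^1$ give the identical answer $\frac{-x-\ln(1-x)}{1-x}$, and conclude that all three patterns $p_0,p_1,p_2$ have popularity $n!(H_n-1)$ on $S_n$, matching \href{https://oeis.org/A001705}{A001705}.
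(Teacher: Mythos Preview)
Your proposal is correct and follows exactly the approach of the paper: compute the popularity generating function as $\left.\partial_y A^i(x,y)\right|_{y=1}$ and observe that the result is the same for $i=1$ and $i=2$ (the paper's proof is in fact even terser than yours, stating only that equality). Your explicit extraction of the coefficient $H_n-1$ from $\frac{-x-\ln(1-x)}{1-x}$ and the alternative Stirling-number argument are both sound and simply flesh out what the paper leaves implicit.
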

\begin{proof}
The generating function of the popularity is
directly deduced from the bivariate generating function of pattern
distribution by calculating $$\left.\frac{ \partial A^1(x,y)}{\partial y}\right|_{y=1}=\left.\frac{ \partial A^2(x,y)}{\partial y}\right|_{y=1}.$$
\end{proof}

\begin{table}[ht!]
\begin{center}
                        \begin{tabular}{c|cccccccc}
                            $k \backslash n$  & 1&2 &3 &4 &5 &6 &7&8\\
                          \hline
                         0 & 1 & 1 & 1 & 1 & 1 & 1 & 1 & 1\\
                         1 &   & 1 & 5 & 20 & 84 & 409 & 2365 & 16064\\
                         2 &   &   &   & 3 & 35 & 295 & 2359 & 19670\\
                         3 &  &  &  &   & & 15  & 315 & 4480\\
                         4 &  &  &  &  & &   &   & 105\\
                         $\ldots$ & & & & & & &  & $\ldots$\\
                          \hline
                           $\sum$ & 1 & 2 & 6 & 24 & 120 & 720 & 5040 &
40320\\

                     \end{tabular}
  \end{center}
\caption{Number of $n$-length permutations having $k$ occurrences of
$p_2$ for $0\leq k\leq 4$ and $1\leq n\leq 8$.}
\label{tab1}
\end{table}

The statistic $\pdes_2$ has a different distribution from $\pdes_0$
and $\pdes_1$, but the three patterns $p_0,p_1,p_2$ have the same
popularity.  Below we present a bijection on $S_n$ that transports the
statistic $\pdes_2$ to the statistics $\pcyc=\cyc - \fix$.

\begin{thm} There is a one-to-one correspondence $\phi$ on $S_n$ such
  that for any $\pi\in S_n$, we have  $$\pdes_2~\pi=\pcyc~\phi(\pi).$$
\label{thm3}
\end{thm}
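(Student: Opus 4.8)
The plan is to build $\phi$ recursively, mirroring the recursive description of permutations counted by $a^2_{n,k}$ in the proof of Theorem~\ref{thm2} and matching it against a parallel recursive description of permutations by their number of pure (non-fixed) cycles. Recall that an $n$-length permutation $\sigma$ with $k$ occurrences of $p_2$ is obtained from an $(n-1)$-length permutation $\pi$ in one of two ways: either by appending $n$ at the end (no new $p_2$, and this is the unique way $\sigma_n=n$ arises), or by appending some $x<n$ after shifting up the values $\ge x$ (a new $p_2$ appears precisely when $\pi$ ended with $n-1$). On the cycle side, I would use the standard ``insert $n$ into a cycle'' construction: starting from $\tau\in S_{n-1}$, either make $n$ a new fixed point (the number of pure cycles is unchanged, and $\fix$ goes up by one), or insert $n$ immediately after one of the $n-1$ elements $j\in[n-1]$ in its cycle (if $j$ was a fixed point of $\tau$ this creates a new pure $2$-cycle and decreases $\fix$ by one; otherwise it merely lengthens an existing pure cycle). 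The key bookkeeping observation is that both recursions have the same shape: one ``free'' branch that does not change the tracked statistic, and $n-1$ branches of which exactly one increments it — for $p_2$ the special branch is ``$\pi$ ends in $n-1$'', and for pure cycles the special branch is ``insert $n$ after a fixed point of $\tau$.''

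Concretely, I would define $\phi$ by induction on $n$, with $\phi$ of the empty/length-one permutation being the identity. Given $\pi\in S_{n-1}$ and its image $\tau=\phi(\pi)\in S_{n-1}$, I proceed as follows. If we form $\sigma$ from $\pi$ by appending $n$, set $\phi(\sigma)=\tau$ with $n$ adjoined as a fixed point. Otherwise $\sigma$ is formed by appending a value $x<n$ after the shift; here I need to decide, from the data $(\pi,x)$, into which of the $n-1$ ``insertion after $j\in[n-1]$'' slots of $\tau$ the new element $n$ goes. The requirement forced on us is: the new slot must be ``insert after a fixed point of $\tau$'' exactly when $\pi_{n-1}=n-1$. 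So I would fix, for each $\tau\in S_{n-1}$, a bijection between the set of pairs $(\pi,x)$ with $\phi(\pi)=\tau$ and $x<n$ (there are $n-1$ of them once $\pi$ is chosen, but $\pi$ ranges over the fiber $\phi^{-1}(\tau)$ — so really I should think of it per fixed $\pi$) and the $n-1$ slots of $\tau$, sending the $x$-choices that make $\sigma$ end ``dangerously'' to fixed-point slots of $\tau$. Since by induction $\pdes_2~\pi=\pcyc~\tau$, I should check the count of dangerous $x$-choices matches the count of fixed-point slots: when $\pi_{n-1}=n-1$ \emph{every} choice of $x$ is dangerous in the sense that it creates a new $p_2$ — wait, that is not quite right, let me restate: a new $p_2$ is created iff $\pi$ ends in $n-1$, \emph{regardless} of $x$, so for such $\pi$ all $n-1$ appended permutations gain a $p_2$, while for $\pi$ not ending in $n-1$ none do. I would therefore instead organize the induction on the pair ``$(\pi, \text{does }\pi\text{ end in }n-1?)$'' versus ``$(\tau,\text{is the chosen slot a fixed-point slot?})$'', and match fibers by sending the whole block of $n-1$ appendings of a $\pi$ ending in $n-1$ onto the $n-1$ insertion slots of $\tau$ in such a way that... this needs the number of fixed points of $\tau$ to correlate correctly, which is where care is required.

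Because of that subtlety, the cleanest route is probably to track \emph{two} statistics simultaneously and prove a refined statement: there is a bijection $\phi$ on $S_n$ with $\pdes_2~\pi=\pcyc~\phi(\pi)$ \emph{and} $\fix~\phi(\pi)$ equal to some natural statistic of $\pi$ — a candidate being the position-related quantity ``$n$ minus the length of the maximal suffix $\pi_i\pi_{i+1}\cdots\pi_{n}$ that is an increasing run ending at... '' or, more likely, the number of ``non-pure'' ascending-type positions, chosen precisely so that the two recursions line up slot for slot. I would identify the right companion statistic by computing the joint recursion on the $p_2$ side — refining $u_{n,k},v_{n,k}$ by also recording whether the last entry equals $n$, which the proof of Theorem~\ref{thm2} already half does — and forcing it to coincide with the joint recursion $(\pcyc,\fix)$ satisfies under ``insert $n$''. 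Once the companion statistic $\mathbf{s}$ is pinned down so that $(\pdes_2,\mathbf{s})$ and $(\pcyc,\fix)$ obey the same two-variable recurrence with the same initial conditions, $\phi$ is built branch-by-branch as above and the verification that it is a bijection and that $\pdes_2~\pi=\pcyc~\phi(\pi)$ is then automatic.

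\textbf{Main obstacle.} The hard part will be choosing the companion statistic $\mathbf{s}$ (equivalently, the precise rule assigning to each $(\pi,x)$ an insertion slot of $\phi(\pi)$) so that ``$\pi$ ends in $n-1$'' translates exactly into ``insert after a fixed point'' \emph{and} the fiber sizes match; everything else — the two one-step constructions, the bijectivity, and the harmonic-number popularity consequence via Corollary~\ref{cor1} — is routine once that alignment is in place.
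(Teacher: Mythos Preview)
The paper does something entirely different and much more direct: it writes $\phi$ down in closed form as a Foata-type map. Observe that the tops of the $p_2$-occurrences in $\pi$ are exactly the left-to-right maxima that are also descent tops; call them $\pi_{i_1}<\cdots<\pi_{i_k}$. One then decomposes the one-line word as $\pi=B_0\,\pi_{i_1}A_1\,B_1\,\pi_{i_2}A_2\,B_2\cdots\pi_{i_k}A_k\,B_k$, where $A_j$ is the maximal run of values smaller than $\pi_{i_j}$ immediately following it and each $B_j$ is the (necessarily increasing) run of remaining left-to-right maxima. Setting $\phi(\pi)=\langle\pi_{i_1}A_1\rangle\cdot\langle\pi_{i_2}A_2\rangle\cdots\langle\pi_{i_k}A_k\rangle$ gives a permutation with exactly $k$ pure cycles and with the elements of the $B_j$'s as its fixed points; this is visibly invertible (it is essentially the inverse of Foata's fundamental transformation read through this block structure), and the whole proof is a few lines.

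Your recursive strategy is a genuinely different route and is not wrong in principle, but as you yourself diagnose in the ``Main obstacle'' paragraph, it has a real gap: the construction cannot be carried out until the companion statistic $\mathbf{s}$ with $(\pdes_2,\mathbf{s})\sim(\pcyc,\fix)$ is identified, and you have not identified it. From the paper's explicit $\phi$ one can read off what $\mathbf{s}$ has to be --- the number of left-to-right maxima of $\pi$ that are \emph{not} descent tops --- and with that in hand your inductive matching does go through (``$\pi$ ends in $n-1$'' is exactly ``the last entry is such a non-descent LR maximum'', which under $\phi$ becomes ``$n-1$ is a fixed point of $\tau$'', so the $n-1$ insertion slots line up). But notice that discovering this $\mathbf{s}$ is tantamount to discovering the block decomposition above, at which point the explicit description is both shorter and more transparent than the recursion. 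As written, your proposal stops precisely where the content of the argument begins.
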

\begin{proof}
  Let $\pi$ be a permutation of length $n$ having
$k$ occurrences of $p_2$. We decompose $$\pi=B_0\pi_{i_1} A_1
B_1\pi_{i_2}A_2B_2\pi_{i_3}\ldots \pi_{i_k}A_{k}B_{k},$$ where

- $\pi_{i_1}<\pi_{i_2}<\ldots< \pi_{i_k}$ are the tops of the
occurrences of $p_2$,  {\it i.e.} values $\pi_{i_j}>\pi_{i_j+1}$ such
that there does not exist $\ell<i_j$ such that $\pi_\ell >\pi_{i_j}$,

-  $A_j$ is a maximal sequence such that all its values are lower than
$\pi_{i_j}$,

- for $0\leq j\leq k$, $B_j$  is an increasing sequence such that
$\pi_{i_j}<\min B_j$ and $\max B_j<\pi_{i_{j+1}}$.

Now we construct an $n$-length permutation $\phi(\pi)$ with $k$ pure
cycles as follows:
$$\phi(\pi)=\langle\pi_{i_1} A_1\rangle\cdot
\langle\pi_{i_2}A_2\rangle\cdots \langle\pi_{i_k}A_{k}\rangle.$$
For instance, if $\pi=1 2 5 3
4 6 8 7 9$ then $\phi(\pi)=\langle 5,3,4\rangle\cdot\langle
8,7\rangle$. The map
$\phi$ is clearly a bijection on $S_n$ such that $\pdes_2~\pi$ equals
the number of pure cycles in $\phi(\pi)$.
\end{proof}

Note that $\phi^{-1}$ is closely related to the Foata fundamental
transformation \cite{Foa}.

\section{The statistic $\pex$ of pure excedances}

In order to prove the equidistribution of $\pex$ and $\pdes_2$,
regarding Theorem~\ref{thm3}, it suffices to construct a bijection on
$S_n$ that transports pure excedances to pure cycles. Here, we first
exhibit a bijection on the set $D_n$ of $n$-length derangements
(permutations without fixed points), then we extend it to the set of
all permutations $S_n$.

Any permutation $\pi\in S_n$ is uniquely decomposed as a product of transpositions of the following form:
$$\pi=\langle t_1,1\rangle\cdot\langle t_2,2\rangle\cdots\langle
t_n,n\rangle$$ where $t_i$ are integers such that $1\leq t_i\leq
i$. The transposition array of $\pi$ is defined by $T(\pi)=t_1t_2\ldots
t_n$, which induces a bijection $\pi\longmapsto T(\pi)$ from $S_n$ to
the product set $T_n=[1]\times [2]\times\cdots\times [n]$.  By Lemma 1
from~\cite{Bar2}, the number of cycles of a permutation $\pi$ is given
by the number of fixed points in $T(\pi)$.  Moreover, it is
straightforward to check the two following properties:

- if $t_i=i$, then $\pi_i=i$ if and only if there is no number $j>i$
such that $t_j=t_i=i$;

- if $t_i=i$ and $\pi_i\neq i$, then $i$ is the minimal element of a cycle of length at least two in $\pi$.

So, we deduce the following lemma.

\begin{lem} The transposition array $t_1t_2\ldots t_n\in T_n$ corresponds to a derangement if and only if:
  {\it  $t_i=i$ $\Rightarrow$ there is a $j>i$ such that $t_j=i.$}
\label{lem1}
\end{lem}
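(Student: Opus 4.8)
The plan is to unpack the two structural bullet points stated just before the lemma and read off the equivalence directly. Recall that $\pi$ is written as $\langle t_1,1\rangle\cdot\langle t_2,2\rangle\cdots\langle t_n,n\rangle$, where $\langle t_i,i\rangle$ denotes the transposition swapping $t_i$ and $i$ (which is the identity when $t_i=i$). First I would observe that the claim concerns exactly those indices $i$ with $t_i=i$: indeed, for an index $i$ with $t_i<i$, the position $i$ is genuinely moved by the factor $\langle t_i,i\rangle$ and, as the bullets implicitly record, cannot be a fixed point of $\pi$ — so such indices never obstruct being a derangement. Hence $\pi$ is a derangement precisely when $\pi$ has no fixed point among the indices $i$ with $t_i=i$.

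Next I would invoke the first bullet point: if $t_i=i$, then $\pi_i=i$ if and only if there is no $j>i$ with $t_j=t_i=i$. Taking the contrapositive, for an index $i$ with $t_i=i$ we have $\pi_i\ne i$ if and only if there exists $j>i$ with $t_j=i$. Combining this with the reduction of the previous paragraph, $\pi$ is a derangement if and only if every index $i$ with $t_i=i$ admits some $j>i$ with $t_j=i$ — which is exactly the stated condition ``$t_i=i \Rightarrow$ there is a $j>i$ such that $t_j=i$''. This establishes both directions of the equivalence at once.

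I should also remark briefly on why the first bullet point holds, since the lemma rests on it, even though the excerpt presents it as ``straightforward to check'': when $t_i=i$, the factor $\langle t_i,i\rangle$ is the identity, so the value at position $i$ in the product is determined by the remaining factors $\langle t_{i+1},i+1\rangle\cdots\langle t_n,n\rangle$; among these, only those with $t_j=i$ (for $j>i$) can move $i$ away from position $i$, and the last such factor in the product order is the one that finally displaces it. If no such $j$ exists, $i$ stays fixed. There is no real obstacle here: the entire argument is a bookkeeping translation of the two bullet points, and the only place requiring a moment of care is keeping the order of composition straight when justifying the first bullet. The lemma then follows immediately.
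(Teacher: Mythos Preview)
Your argument is correct and follows exactly the route the paper takes: the paper states the two bullet points, declares them ``straightforward to check'', and then simply writes ``So, we deduce the following lemma'' without further proof, so your proposal is in fact a fleshed-out version of the paper's own deduction. One minor remark: the case $t_i<i$ is not literally covered by the stated bullets (they are both conditioned on $t_i=i$), so your phrase ``as the bullets implicitly record'' slightly overstates what is written; the fact that $t_i<i$ forces $\pi_i\ne i$ follows from the same composition-order reasoning you give in your final paragraph (or, alternatively, from the cited cycle-counting lemma), and is at the same ``straightforward'' level the paper assumes.
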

\medskip

Given a derangement $\pi=\pi_1\pi_2\ldots \pi_n\in D_n$ and its
graphical representation $\{(i,\pi_i), i\in[n]\}$. We say that the
square $(i,j)\in [n]\times[n]$ is {\it free} if all following
conditions hold:
  \begin{enumerate}[(i)]
  \item Neither $\pi_i$ nor $i$ is a position of a pure excedance;
  \item $(i,j)$ is not on the first diagonal, {\it i.e.} $j\neq i$;
  \item there does not exist $k>i$ such that $\pi_k=j$;
  \item $j$ is not a pure excedance such that $j<i$ and $\pi^{-1}(j)<i$; 
  \item there does not exist $k<i$, with $\pi_k=j>i$  such that all values of the interval $[i,j-1]$ appear on the right of $\pi_i$ in $\pi$.
  \end{enumerate}

Whenever at least one of the statements above is not satisfied, we say
that the square $(i,j)$ is {\it unfree}. Notice that if $i$ and $\pi_i$ are not the positions of a pure excedance, then 
the square $(i,\pi_i)$ is always free. So, for a column $i$ of the graphical
representation of $\pi$ such that $i$ and $\pi_i$ are not the positions of a pure excedance, we label by $j$ the $j$th free square from
the bottom to the top. We refer to Figure~\ref{fig2} for an example of this labelling.

Now we define the map $\lambda$ from $D_n$ to the set $T_n^\bullet$ of transposition arrays of length $n$ satisfying the property of Lemma \ref{lem1}.

For a permutation $\pi=\pi_1\pi_2\ldots \pi_n\in D_n$, we label its graphical representation as defined above, and $\lambda(\pi)=\lambda_1\lambda_2\ldots\lambda_n$ is obtained as follows:

\begin{itemize}
\item if $i$ is a pure excedance in $\pi$, then we set $\lambda_i=i$ and  $\lambda_{\pi^{-1}(i)}=i$;
\item otherwise, $\lambda_i$ is the sum of the label of the free square $(i,\pi_i)$ with the number of pure excedances $k<i$ such that $\pi^{-1}(k)<i$.
\end{itemize}

For instance, if $\pi=6~8~12~5~4~7~3~2~11~1~9~10$ then we obtain
$\lambda(\pi)=1~1~2~4~4~2~1~1~9~1~9~10$ (see 
Figure~\ref{fig2}). 

Let us consider $i$, $1\leq i\leq n$. If $i$ is a pure excedance of
$\pi$, then we fix $\lambda_i=i$ and
$\lambda_{\pi^{-1}(i)}=i<\pi^{-1}(i)$. Otherwise, the square $(i,i)$
is unfree, and all squares $(i,\pi_k)$, $i+1\leq k\leq n$, are unfree,
which implies that the number of free squares in the $i$th column is
less than or equal to $i$. This means that $\lambda(\pi)$ lies in
$T_n$. 
Note that, by construction, all labeled squares do not correspond to any pure excedance.
Now let us prove that the square $(i,\pi_i)$ cannot be labeled $i$.
Indeed, if $\pi_i<i$ then the label of $(i,\pi_i)$ is necessarily at 
most $\pi_i\leq i-1$; otherwise, if $\pi_i>i$ then the fact that $i$ is 
not a pure excedance implies that there is 
$\pi_j\in[i,\pi_i-1]$ with 
$j<i$. Let us choose the lowest $j$ with this property. Using (v), the 
square $(i,j)$ is unfree, which implies that the label of $(i,\pi_i)$ is
less than or equal to $n$ minus the minimal number of unfree squares $(i,j)$ 
in column $i$, that is $n-(n-i+1)=i-1$.
Moreover, the
transposition array $\lambda(\pi)$ has exactly $\pex~\pi$ fixed
points, and for any fixed point $i$ there necessarily exists
$j=\pi^{-1}(i)>i$ such that $\lambda_j=\lambda_i=i$. This implies that
$\lambda(\pi)\in T_n^\bullet$.
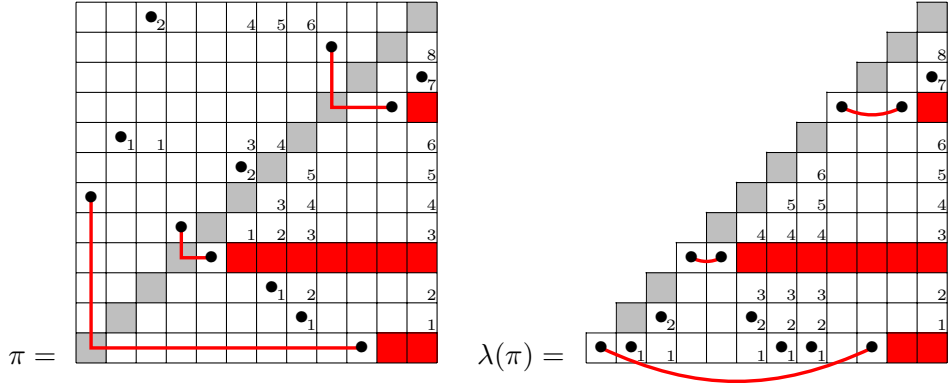
\begin{figure}[H]
	\begin{center}
$\pi =~$\begin{tikzpicture}[scale=0.4]
  \tikzstyle{r} = [fill = red, draw = none, xshift = 0.5cm, yshift = 0.5cm]  
  \tikzstyle{g} = [fill = lightgray, draw = none, xshift = 0.5cm, yshift = 0.5cm]

  \foreach \x in {0,...,11}
    \draw[g] (\x,\x) rectangle (\x + 1, \x + 1);

  \draw[r] (10,0) rectangle (12, 1);
  \draw[r] (5,3)  rectangle (12, 4);
  \draw[r] (11,8) rectangle (12, 9);
  
  \draw[step = 1, black, thin, xshift = 0.5cm, yshift = 0.5cm]
  (0, 0) grid (12, 12);


  \begin{scope}[line width = 0.45mm, red]
    \draw (1,6) -- (1,1) -- (10, 1);
    \draw (4,5) -- (4,4) -- (5, 4);
    \draw (9,11) -- (9,9) -- (11, 9);
  \end{scope}
 
  \node (p1) at (1,6) {$\bullet$};
  \node (p2) at (2,8) {$\bullet$};
  \node (p3) at (3,12) {$\bullet$};
  \node (p4) at (4,5) {$\bullet$};
  \node (p5) at (5,4) {$\bullet$};
  \node (p6) at (6,7) {$\bullet$};
  \node (p7) at (7,3) {$\bullet$};
  \node (p8) at (8,2) {$\bullet$};
  \node (p9) at (9,11) {$\bullet$};
  \node (p10) at (10,1) {$\bullet$};
  \node (p11) at (11,9) {$\bullet$};
  \node (p12) at (12,10) {$\bullet$};

  \begin{scope}[transform canvas = {xshift = 0.13cm, yshift = -0.1cm}]
    \node at (2,  8) {\tiny$1$};

    \node at (3,  8) {\tiny$1$};
    \node at (3,  12) {\tiny$2$};

    \node at (6,  5) {\tiny$1$};
    \node at (6,  7) {\tiny$2$};
    \node at (6,  8) {\tiny$3$};
    \node at (6,  12) {\tiny$4$};

    \node at (7,  3) {\tiny$1$};
    \node at (7,  5) {\tiny$2$};
    \node at (7,  6) {\tiny$3$};
    \node at (7,  8) {\tiny$4$};
    \node at (7,  12) {\tiny$5$};

    \node at (8,  2) {\tiny$1$};
    \node at (8,  3) {\tiny$2$};
    \node at (8,  5) {\tiny$3$};
    \node at (8,  6) {\tiny$4$};
    \node at (8,  7) {\tiny$5$};
    \node at (8,  12) {\tiny$6$};

    \node at (12,  2) {\tiny$1$};    
    \node at (12,  3) {\tiny$2$};
    \node at (12,  5) {\tiny$3$};
    \node at (12,  6) {\tiny$4$};
    \node at (12,  7) {\tiny$5$};
    \node at (12,  8) {\tiny$6$};
    \node at (12, 10) {\tiny$7$};
    \node at (12, 11) {\tiny$8$};

  \end{scope}

\end{tikzpicture}
\quad$\lambda(\pi)=~$\begin{tikzpicture}[scale=0.4, baseline=1.89mm]
  \tikzstyle{r} = [fill = red, draw = none, xshift = 0.5cm, yshift = 0.5cm]
  \tikzstyle{g} = [pattern = north west lines, fill = lightgray, draw = none, xshift = 0.5cm, yshift = 0.5cm]

  \draw[r] (10,0) rectangle (12, 1);
  \draw[r] (5,3)  rectangle (12, 4);
  \draw[r] (11,8) rectangle (12, 9);

  \draw[g] (1,1) rectangle (2,2);
  \draw[g] (2,2) rectangle (3,3);
  \draw[g] (4,4) rectangle (5,5);
  \draw[g] (5,5) rectangle (6,6);
  \draw[g] (6,6) rectangle (7,7);
  \draw[g] (7,7) rectangle (8,8);
  \draw[g] (9,9) rectangle (10,10);
  \draw[g] (10,10) rectangle (11,11);
  \draw[g] (11,11) rectangle (12,12);
  
  \draw[step = 1, black, thin, xshift = 0.5cm, yshift = 0.5cm]
  (0, 0) grid (12, 12);


  \begin{scope}[line width = 0.45mm, red]
    \draw (1,1) edge [bend right=25] (10, 1);
    \draw (4,4) edge [bend right=25] (5, 4);
    \draw (9,9) edge [bend right=25] (11, 9);
  \end{scope}

  \node (p1) at (1,1) {$\bullet$};
  \node (p2) at (2,1) {$\bullet$};
  \node (p3) at (3,2) {$\bullet$};
  \node (p4) at (4,4) {$\bullet$};
  \node (p5) at (5,4) {$\bullet$};
  \node (p6) at (6,2) {$\bullet$};
  \node (p7) at (7,1) {$\bullet$};
  \node (p8) at (8,1) {$\bullet$};
  \node (p9) at (9,9) {$\bullet$};
  \node (p10) at (10,1) {$\bullet$};
  \node (p11) at (11,9) {$\bullet$};
  \node (p12) at (12,10) {$\bullet$};

  \begin{scope}[transform canvas = {xshift = 0.13cm, yshift = -0.1cm}]
    \node at (2,  1) {\tiny$1$};

    \node at (3,  1) {\tiny$1$};
    \node at (3,  2) {\tiny$2$};

    \node at (6,  1) {\tiny$1$};
    \node at (6,  2) {\tiny$2$};
    \node at (6,  3) {\tiny$3$};
    \node at (6,  5) {\tiny$4$};

    \node at (7,  1) {\tiny$1$};
    \node at (7,  2) {\tiny$2$};
    \node at (7,  3) {\tiny$3$};
    \node at (7,  5) {\tiny$4$};
    \node at (7,  6) {\tiny$5$};

    \node at (8,  1) {\tiny$1$};
    \node at (8,  2) {\tiny$2$};
    \node at (8,  3) {\tiny$3$};
    \node at (8,  5) {\tiny$4$};
    \node at (8,  6) {\tiny$5$};
    \node at (8,  7) {\tiny$6$};

    \node at (12,  2) {\tiny$1$};    
    \node at (12,  3) {\tiny$2$};
    \node at (12,  5) {\tiny$3$};
    \node at (12,  6) {\tiny$4$};
    \node at (12,  7) {\tiny$5$};
    \node at (12,  8) {\tiny$6$};
    \node at (12, 10) {\tiny$7$};
    \node at (12, 11) {\tiny$8$};
  \end{scope}

  \draw[fill = white, draw = none, xshift = 0.475cm, yshift = 0.525cm]
  (-0.1, 1) -- (1, 1) --
  (1, 2) -- (2, 2) --
  (2, 3) -- (3, 3) --
  (3, 4) -- (4, 4) --
  (4, 5) -- (5, 5) --
  (5, 6) -- (6, 6) --
  (6, 7) -- (7, 7) --
  (7, 8) -- (8, 8) --
  (8, 9) -- (9, 9) --
  (9, 10) --  (10, 10) --
  (10, 11) -- (11, 11) --
  (11, 12.1) --
  (-0.1,12.1)  -- cycle;
\end{tikzpicture}

	\end{center}
        \caption{Illustration of the bijection $\lambda$
          for $\pi = 6~8~12~5~4~7~3~2~11~1~9~10$ and $\lambda(\pi)=1~1~2~4~4~2~1~1~9~1~9~10$.
        }
\label{fig2}\end{figure}

\begin{thm} The map $\lambda$ from $D_n$ to $T_n^\bullet$ is a bijection such that $$\pex~\pi=\fix ~\lambda(\pi).$$
\label{thm4}\end{thm}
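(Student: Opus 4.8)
The plan is to supply the one remaining ingredient: that $\lambda$ is a bijection (the discussion preceding the statement already shows that $\lambda$ maps $D_n$ into $T_n^\bullet$ and that $\fix\,\lambda(\pi)=\pex\,\pi$). Since $\pi\mapsto T(\pi)$ is a bijection from $S_n$ onto $T_n$ which, by Lemma~\ref{lem1}, restricts to a bijection from $D_n$ onto $T_n^\bullet$, the finite sets $D_n$ and $T_n^\bullet$ are equinumerous; hence it is enough to show $\lambda$ injective, which I would do by describing a decoding that recovers $\pi$ from $t:=\lambda(\pi)=t_1t_2\ldots t_n$.

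First, the fixed points of $t$ are exactly the pure-excedance positions of $\pi$, that is $E=\{i:t_i=i\}$, and for each $i\in E$ the value $i$ occupies some column $j>i$ with $t_j=i$. The first step is to identify $j=\pi^{-1}(i)$ among all such indices: every index $j'>i$ with $t_{j'}=i$ other than $\pi^{-1}(i)$ lies in a ``good'' column (one treated by the second bullet of the definition of $\lambda$) and receives an honest free-square label instead of the exceptional value $i$ --- this is exactly what conditions~(iv) and~(v) in the definition of a free square are meant to enforce. Once $E$, the images $\{\pi_i:i\in E\}$ and the preimages $\{\pi^{-1}(i):i\in E\}$ are known, all entries of $\pi$ in the corresponding rows and columns are fixed, and we may compute $c_i:=\#\{k\in E:k<i,\ \pi^{-1}(k)<i\}$ for every $i$.

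For a good column $i$ one has $t_i=\ell_i+c_i$, where $\ell_i$ is the rank from the bottom of the free square $(i,\pi_i)$ in that column, so $\ell_i=t_i-c_i$ is recovered; the value $\pi_i$ is then read off as the second coordinate of the $\ell_i$-th free square of column $i$. The already-proved bounds --- column $i$ has at most $i$ free squares, hence $\ell_i\le i$, and $(i,\pi_i)$ is never labelled $i$, hence $\ell_i\le i-1$ so the value $i$ stays reserved for the first-bullet mechanism --- guarantee that the process never stalls and produces no accidental fixed point of $t$. The main obstacle is the consistency of this reconstruction: one must check both that the ``partner'' $\pi^{-1}(i)$ of each $i\in E$ is uniquely singled out as above, and that the set of free squares of every good column is pinned down by the information recovered so far, even though conditions~(iii) and~(v) refer respectively to the right and to both sides of the column in question (this is likely to require an induction carrying auxiliary information about the still-undetermined entries). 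Granting this, one verifies routinely that the output is a derangement whose image under $\lambda$ is $t$, so the decoding is a two-sided inverse of $\lambda$; together with $\fix\,\lambda(\pi)=\pex\,\pi$ this proves the theorem.
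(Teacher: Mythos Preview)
Your reduction to injectivity is correct and matches the paper, but the heart of your argument is explicitly left open: you identify the ``main obstacle'' (singling out the partner $\pi^{-1}(i)$ among the several $j>i$ with $t_j=i$, and pinning down the free squares of column $i$ when conditions~(iii) and~(v) refer to entries of $\pi$ not yet reconstructed), say it ``is likely to require an induction,'' and then write ``Granting this.'' That is the whole difficulty. In the running example $\lambda(\pi)=1\,1\,2\,4\,4\,2\,1\,1\,9\,1\,9\,10$ there are four indices $j>1$ with $t_j=1$, and nothing you have written explains how to pick out $j=10$ as $\pi^{-1}(1)$ before knowing the rest of $\pi$; your appeal to conditions~(iv) and~(v) is a hope, not an argument. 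Likewise, the free squares of column $i$ depend (via~(v)) on values $\pi_k$ with $k<i$, so a right-to-left sweep does not obviously close, and you give no induction hypothesis that would make it close.

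The paper sidesteps this circularity entirely. Instead of building an inverse, it proves injectivity by direct comparison: given $\pi\neq\sigma$ in $D_n$, either their pure excedances (hence the fixed points of their images) differ, or their sets $\{\pi^{-1}(i)\}$ differ, or else one looks at the \emph{largest} index $j$ with $\pi_j\neq\sigma_j$; since $\pi$ and $\sigma$ then agree to the right of $j$ and share all pure-excedance data, the free-square labelling in column $j$ is the same for both, and $\pi_j<\sigma_j$ forces $\lambda(\pi)_j<\lambda(\sigma)_j$. This comparison argument is short and avoids ever having to reconstruct $\pi$ from $\lambda(\pi)$. Your decoding approach could in principle be completed, but as written it is a sketch with the crucial step assumed.
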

\begin{proof}

Since the cardinality of $T_n^\bullet$ equals that of $D_n$, and the
image of $D_n$ by $\lambda$ is contained in $T_n^\bullet$, it suffices
to prove the injectivity.

Let $\pi$ and $\sigma$, $\pi\neq \sigma$, be two derangements in $D_n$.
If $\pi$ and $\sigma$ do not have the same pure excedances, 
then, by construction,
$\lambda(\pi)$ and $\lambda(\sigma)$ do not have the same fixed points,
and thus $\lambda(\pi)\neq \lambda(\sigma)$.

Now, let us assume that $\pi$ and $\sigma$ have the same pure
excedances. If there is a pure excedance $i$ such that 
$\pi^{-1}(i)\neq \sigma^{-1}(i)$ then the definition implies 
$\lambda(\pi)\neq\lambda(\sigma)$. Otherwise the two permutations have 
the same pure excedances $i$, and for each of them we have  
$\pi^{-1}(i)=\sigma^{-1}(i)$. Let $j$ be the greatest integer such that 
$\pi_j\neq \sigma_j$ (without loss of generality, we assume 
$\pi_j<\sigma_j$).
In this case, $j$ is not a pure excedance for the two permutations. 
Thus, $\lambda(\pi)_j$ (resp. $\lambda(\sigma)_j$) is the sum of the 
label of $(j,\pi_j)$ (resp. $(j,\sigma_j)$) with  the number of pure 
excedances $k<j$ such that $\pi^{-1}(k)<j$ (resp.  $\sigma^{-1}(k)<j$).
Since we have $\pi_j<\sigma_j$, the label of $(j,\pi_j)$ is less than 
the label of $(j,\sigma_j)$, and the number of pure excedances $k<j$ 
such that $\pi^{-1}(k)<j$ is less than or equal to the number of pure 
excedances $k<j$ such that $\sigma^{-1}(k)<j$. Then we have 
$\lambda(\pi)_j<\lambda(\sigma)_j$. Then $\lambda$ is an injective map, 
and thus a bijection.
\end{proof}

\begin{thm} There is a one-to-one correspondence $\psi$ on the set $D_n$ of $n$-length derangements such that for any $\pi\in D_n$, $$\pex~\pi=\cyc~\psi(\pi).$$
\label{thm5}
\end{thm}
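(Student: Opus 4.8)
The statement is essentially the composition of \thm 4 with the standard transposition-array encoding, so the proof should be short. Recall from the discussion preceding Lemma~\ref{lem1} that the map $\pi\longmapsto T(\pi)=t_1t_2\ldots t_n$ is a bijection from $S_n$ onto $T_n=[1]\times[2]\times\cdots\times[n]$, and that by Lemma~1 of \cite{Bar2} the number $\cyc~\rho$ of cycles of any permutation $\rho\in S_n$ equals $\fix~T(\rho)$, the number of fixed points of its transposition array. Lemma~\ref{lem1} of this paper furthermore says that $T(\rho)\in T_n^\bullet$ precisely when $\rho$ is a derangement; hence the inverse encoding $T^{-1}$ restricts to a bijection from $T_n^\bullet$ onto $D_n$.

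\textbf{Construction of $\psi$.} I would simply set
$$\psi \;=\; T^{-1}\circ\lambda \;:\; D_n \longrightarrow D_n,$$
where $\lambda:D_n\to T_n^\bullet$ is the bijection of \thm 4. Since $\lambda$ has image contained in (in fact equal to) $T_n^\bullet$ and $T^{-1}$ maps $T_n^\bullet$ bijectively onto $D_n$, the composite $\psi$ is a well-defined bijection on $D_n$. For the statistic, for any $\pi\in D_n$ we have $\psi(\pi)=T^{-1}(\lambda(\pi))$, so by Lemma~1 of \cite{Bar2}, $\cyc~\psi(\pi)=\fix~T\bigl(\psi(\pi)\bigr)=\fix~\lambda(\pi)$, and then \thm 4 gives $\fix~\lambda(\pi)=\pex~\pi$. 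Chaining these equalities yields $\pex~\pi=\cyc~\psi(\pi)$, as required.

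\textbf{What needs care.} There is no genuine obstacle left once \thm 4 is in hand; the only point that must be checked is that the two bijections compose on the nose, i.e. that the image of $\lambda$ is \emph{exactly} the set $T_n^\bullet$ of transposition arrays satisfying the defining implication of Lemma~\ref{lem1}. This was already established in the paragraph proving that $\lambda(\pi)\in T_n^\bullet$ (the fixed points of $\lambda(\pi)$ are the pure excedances of $\pi$, and every such fixed point $i$ has a later coordinate equal to $i$, coming from $\pi^{-1}(i)>i$). One could optionally illustrate $\psi$ on the running example $\pi=6~8~12~5~4~7~3~2~11~1~9~10$ by reading off $T^{-1}$ of $\lambda(\pi)=1~1~2~4~4~2~1~1~9~1~9~10$ and checking that the resulting derangement has exactly $\pex~\pi=3$ cycles.
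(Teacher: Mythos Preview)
Your proof is correct and follows essentially the same idea as the paper: take the permutation $\sigma$ whose transposition array is $\lambda(\pi)$ (i.e.\ $\sigma=T^{-1}(\lambda(\pi))$), which lies in $D_n$ by Lemma~\ref{lem1} and satisfies $\cyc~\sigma=\fix~\lambda(\pi)=\pex~\pi$ by the cited Lemma~1 of \cite{Bar2} together with Theorem~\ref{thm4}. The paper's proof additionally invokes Theorem~\ref{thm3} and post-composes with $\phi$, writing $\psi(\pi)=\phi(\sigma)$; your version simply stops at $\psi(\pi)=\sigma$, which already yields the desired bijection on $D_n$ and the statistic identity, so the extra reference to $\phi$ is not needed for Theorem~\ref{thm5} itself.
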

\begin{proof} Considering Theorem \ref{thm3} and Theorem \ref{thm4}, we define for any $\pi\in D_n$, $\psi(\pi)=\phi(\sigma)$ where $\sigma$ is the permutation having $\lambda(\pi)$ as transposition array.
\end{proof}

\begin{thm} The two bistatistics $(\pex,\fix)$ and $(\pcyc,\fix)$ are equidistribiuted on $S_n$.
\label{thm6}
\end{thm}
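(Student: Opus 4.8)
The plan is to reduce the statement to Theorem~\ref{thm5} by a fixed-point peeling argument. First I would record the (easy but essential) fact that both $\pex$ and $\pcyc$ are blind to fixed points: if $\pi\in S_n$ has fixed-point set $F\subseteq[n]$ with $|F|=f$, and $\hat\pi\in D_{n-f}$ denotes the derangement obtained by deleting from the one-line word of $\pi$ all fixed entries $\pi_i=i$ ($i\in F$) and relabelling the surviving positions and values order-isomorphically by $[n-f]$, then $\pex~\pi=\pex~\hat\pi$ and $\pcyc~\pi=\pcyc~\hat\pi$, while $\fix~\pi=f$. The assertion for $\pcyc$ is immediate, since deleting fixed points does not affect the cycles of length at least two.

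For $\pex$ I would argue in three steps. (a) If $i\notin F$ and $\pi_i>i$, the value $\pi_i$ is itself non-fixed, since otherwise $\pi^{-1}(\pi_i)$ would have to equal both $i$ and $\pi_i$; likewise, any $j<i$ with $i\le\pi_j<\pi_i$ has $j$ and $\pi_j$ non-fixed. Hence the pure excedances of $\pi$, together with every point that could keep an excedance from being pure, live on the rows and columns indexed by $[n]\setminus F$. (b) These points, and all inequalities among them (``$\pi_i>i$'', ``$j<i$'', ``$i\le\pi_j$'', ``$\pi_j<\pi_i$''), are preserved by the relabelling in both directions; each such preservation reduces to the observation that a half-open integer interval one of whose endpoints is a non-fixed integer cannot be contained entirely in $F$, so the corresponding strict inequality stays strict after deletion. (c) Therefore $i$ is a pure excedance of $\pi$ if and only if its relabelled copy is a pure excedance of $\hat\pi$, which gives $\pex~\pi=\pex~\hat\pi$.

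With this in hand, I would partition $S_n=\bigsqcup_{F\subseteq[n]}S_n^F$, where $S_n^F$ is the set of permutations whose fixed-point set is exactly $F$; the map $\pi\mapsto\hat\pi$ is then a bijection $S_n^F\to D_{n-|F|}$ transporting $\pex$ and $\pcyc$ to $\pex$ and $\pcyc$, and with $\fix$ constant equal to $|F|$ on $S_n^F$. Writing $\psi_m\colon D_m\to D_m$ for the bijection of Theorem~\ref{thm5}, which satisfies $\pex~\rho=\cyc~\psi_m(\rho)=\pcyc~\psi_m(\rho)$ for $\rho\in D_m$ (the last equality because $\psi_m(\rho)$ is again a derangement), I would define $\Psi\colon S_n\to S_n$ by sending $\pi\in S_n^F$ to the unique element of $S_n^F$ whose associated derangement is $\psi_{n-|F|}(\hat\pi)$. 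Since $\pi\mapsto\hat\pi$ and $\psi_{n-|F|}$ are bijections, $\Psi$ restricts to a bijection of each $S_n^F$, hence is a bijection of $S_n$; and for $\pi\in S_n^F$ one gets $\pcyc~\Psi(\pi)=\pcyc~\psi_{n-|F|}(\hat\pi)=\pex~\hat\pi=\pex~\pi$ together with $\fix~\Psi(\pi)=|F|=\fix~\pi$. Thus $\Psi$ carries $(\pex,\fix)$ to $(\pcyc,\fix)$, which yields the desired equidistribution on $S_n$.

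The only point requiring genuine care is the claim that $\pex$ is insensitive to fixed points, and within it step (b): checking that the excedance and blocking inequalities are both preserved and reflected when fixed points are deleted. I expect this bookkeeping to be the main obstacle, although it is elementary and amounts to counting how many elements of $F$ lie inside various integer intervals whose anchoring endpoint is guaranteed to be outside $F$.
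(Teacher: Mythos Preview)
Your approach is essentially the same as the paper's: delete the fixed points, pass to the derangement $\hat\pi$, apply the bijection $\psi$ of Theorem~\ref{thm5}, and reinsert the fixed points at the same positions; the paper calls the resulting map $\bar\psi$ and simply asserts ``by construction'' that $\pex~\pi=\pcyc~\bar\psi(\pi)$ and $\fix~\pi=\fix~\bar\psi(\pi)$. Your write-up is more careful than the paper's in that you actually verify that $\pex$ is insensitive to fixed points, which the paper leaves implicit; your steps (a)--(c) are correct, though (b) can be simplified: once you know all relevant positions and values lie in $[n]\setminus F$, the relabelling $\rho$ is an order-isomorphism on that set, so every strict and non-strict inequality is preserved and reflected automatically, and the half-open-interval remark is not needed.
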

\begin{proof}
  Considering Theorem \ref{thm5}, we define the map $\bar{\psi}$ on $S_n$. Let $\pi'$ be the permutation obtained from $\pi$ by deleting all fixed points and after rescaling as a permutation. Let $I=\{i_1,i_2,\ldots, i_k\}$ be the set of fixed points of $\pi$. Then, we set $\pi''=\psi(\pi')$. So, $\sigma=\bar{\psi}(\pi)$ is obtained from $\pi''$ by inserting fixed points $i\in I$ after a shift of all other entries in order to produce a permutation in $S_n$. By construction, we have $\pex~\pi= \pcyc~\sigma$ and $\fix~\pi=\fix~\sigma$ which completes the proof.
\end{proof}
A byproduct of this theorem is
\begin{cor} The statistics  $\cyc$ and  $\pex+\fix$ are equidistributed on $S_n$.
\label{cor2}
\end{cor}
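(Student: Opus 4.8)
The plan is to read this off directly from Theorem~\ref{thm6}. That theorem supplies a bijection $\bar\psi$ on $S_n$ with $\pex~\pi = \pcyc~\bar\psi(\pi)$ and $\fix~\pi = \fix~\bar\psi(\pi)$ for every $\pi\in S_n$. First I would add these two equalities and substitute the identity $\pcyc~\sigma = \cyc~\sigma - \fix~\sigma$ recorded in the list of statistics, obtaining
$$\pex~\pi + \fix~\pi \;=\; \pcyc~\bar\psi(\pi) + \fix~\bar\psi(\pi) \;=\; \cyc~\bar\psi(\pi)$$
for all $\pi\in S_n$. Since $\bar\psi$ is a bijection of $S_n$, this means that for each integer $m$ the set $\{\pi\in S_n : \pex~\pi+\fix~\pi = m\}$ is mapped bijectively by $\bar\psi$ onto $\{\sigma\in S_n : \cyc~\sigma = m\}$; counting both sides yields that $\cyc$ and $\pex+\fix$ are equidistributed on $S_n$, which is the claim.

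There is essentially no obstacle here: all the substance is already packaged in Theorem~\ref{thm6}, and the only verification needed is the bookkeeping identity $\pcyc+\fix=\cyc$, which holds by the very definition of $\pcyc$. If one preferred a more hands-on argument, one could instead combine Theorem~\ref{thm5} (which gives $\cyc~\psi(\pi)=\pex~\pi$ on derangements) with the fixed-point insertion device from the proof of Theorem~\ref{thm6}, checking that inserting a fixed point raises both $\cyc$ and $\pex+\fix$ by exactly one; but the route through the already-established bistatistic equidistribution is the cleanest and shortest, so that is the one I would present.
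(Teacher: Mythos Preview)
Your argument is correct and is exactly the route the paper intends: the corollary is stated as an immediate byproduct of Theorem~\ref{thm6}, and you have simply unpacked that by adding the two equalities from $\bar\psi$ and using $\pcyc+\fix=\cyc$. There is nothing to add.
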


Also, a direct consequence of Theorems \ref{thm3} and  \ref{thm6} is
\begin{thm} The two statistics $\pex$ and $\des_2$ are equidistributed on $S_n$.
\label{thm7}
\end{thm}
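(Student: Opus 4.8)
The plan is to obtain the statement purely by composing bijections already constructed, in accordance with the phrasing ``a direct consequence of Theorems~\ref{thm3} and \ref{thm6}''. First I would record what Theorem~\ref{thm6} delivers at the level of maps rather than merely of distributions: the map $\bar\psi$ built in its proof is a bijection on $S_n$ with $\pex~\pi=\pcyc~\bar\psi(\pi)$ for every $\pi\in S_n$. Forgetting the $\fix$ coordinate, this already says that $\pex$ and $\pcyc$ are equidistributed on $S_n$.

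Next I would invoke Theorem~\ref{thm3}: $\phi$ is a bijection on $S_n$ satisfying $\pdes_2~\pi=\pcyc~\phi(\pi)$, hence its inverse $\phi^{-1}$ is a bijection on $S_n$ transporting $\pcyc$ to $\pdes_2$, that is $\pcyc~\tau=\pdes_2~\phi^{-1}(\tau)$ for all $\tau\in S_n$. Setting $\Phi=\phi^{-1}\circ\bar\psi$, one gets a bijection on $S_n$ with
$$\pex~\pi=\pcyc~\bar\psi(\pi)=\pdes_2~\Phi(\pi)$$
for every $\pi\in S_n$, which yields the claimed equidistribution of $\pex$ and $\des_2=\pdes_2$ on $S_n$. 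Taking the composition in the other order, $\bar\psi^{-1}\circ\phi$, would give a bijection sending $\pdes_2$ to $\pex$ instead.

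There is essentially no obstacle here: the one point requiring a little care, rather than any new combinatorial argument, is to check that the two theorems are tracking the same statistic $\pcyc=\cyc-\fix$ (Theorem~\ref{thm6} is phrased with the bistatistic $(\pex,\fix)$ versus $(\pcyc,\fix)$, and Theorem~\ref{thm3} with $\pcyc$ rather than $\cyc$), so that the composition is legitimate; this is immediate from the explicit constructions of $\phi$ and $\bar\psi$.
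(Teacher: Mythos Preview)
Your argument is correct and is exactly the intended one: the paper itself states Theorem~\ref{thm7} only as ``a direct consequence of Theorems~\ref{thm3} and~\ref{thm6}'', and your composition $\phi^{-1}\circ\bar\psi$ makes that consequence explicit. The only care point you raise—checking that both theorems involve the same intermediate statistic $\pcyc$—is indeed immediate from the statements of Theorems~\ref{thm3} and~\ref{thm6}.
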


Notice that the Foata’s first transformation is not a candidate for proving the equidistribution 
of $\pex$ and $\des_2$, while it transports $\exc$ to $\des$.  
Combining Theorem~\ref{thm7} and Corollary~\ref{cor1} we have the following. 

\begin{cor} For $0\leq i\leq 2$, the patterns $p_i$ and $pex$ are equipopular on
$S_n$ (see \href{https://oeis.org/A001705}{A001705} in
\cite{Sloa}).
\label{cor3}
\end{cor}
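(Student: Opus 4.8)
The plan is to obtain this corollary without building anything new, by observing that the \emph{popularity} of a pattern on $S_n$ is a fixed linear functional of its \emph{distribution}, and then chaining two results already in hand. Indeed, for a pattern $p$ the popularity of $p$ on $S_n$ equals $\sum_{\pi\in S_n}{\bf p}~\pi=\sum_{k\geq 0}k\,b_{n,k}$, where $b_{n,k}$ denotes the number of $\pi\in S_n$ with exactly $k$ occurrences of $p$. Hence any two pattern statistics that are equidistributed on $S_n$ are, in particular, equipopular on $S_n$.

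First I would invoke Theorem~\ref{thm7}: the statistics $\pex$ and $\pdes_2$ are equidistributed on $S_n$, so the pattern $pex$ is equipopular with $p_2$ on $S_n$. Then I would invoke Corollary~\ref{cor1}, which states that $p_0$, $p_1$ and $p_2$ are equipopular on $S_n$ with common popularity $n!\cdot(H_n-1)$. Combining the two, the four patterns $p_0,p_1,p_2$ and $pex$ all have popularity $n!\cdot(H_n-1)$, which is exactly the claim, and the pointer to sequence \href{https://oeis.org/A001705}{A001705} in \cite{Sloa} can be carried over from Corollary~\ref{cor1}.

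I do not expect a real obstacle at this stage: the substantive content is entirely in the two ingredients being merged --- the bijection $\psi$ together with its fixed-point-preserving extension $\bar\psi$ behind Theorem~\ref{thm7}, and the generating-function computation in the proof of Corollary~\ref{cor1}. The one point worth a line of care is the opening remark that popularity factors through the distribution; since the paper deliberately distinguishes equipopularity from equidistribution (e.g.\ $p_2$ is equipopular with $p_0$ and $p_1$ but not equidistributed with them), it is useful to state explicitly that equidistribution is the stronger property, so that the direction of the implication used here is unambiguous.
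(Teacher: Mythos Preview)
Your proposal is correct and matches the paper's own argument exactly: the paper simply states that the corollary follows by combining Theorem~\ref{thm7} with Corollary~\ref{cor1}. Your added remark that equidistribution implies equipopularity is a harmless (and welcome) clarification of a step the paper leaves implicit.
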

Finally, we present two conjectures for future works.

\begin{conj} The two bistatistics  $(\des_2,\mm{cyc})$ and 
$(\pex, \mm{cyc})$ are equidistributed on $S_n$.
\label{c1}
\end{conj}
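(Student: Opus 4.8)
\medskip
\noindent Toward Conjecture~\ref{c1}, a natural plan is the following. The task is to upgrade Theorem~\ref{thm7} to a cycle--preserving statement, i.e.\ to construct a bijection $\Theta$ on $S_n$ with $\des_2~\pi=\pex~\Theta(\pi)$ \emph{and} $\cyc~\pi=\cyc~\Theta(\pi)$. One first checks that the bijection obtained by simply composing the maps of Theorems~\ref{thm3} and~\ref{thm6} (which already proves Theorem~\ref{thm7}) does not do this: for instance $\pi=321$ has $\des_2~\pi=1$ and $\cyc~\pi=2$, but it is sent to a permutation with a single cycle. This is unsurprising: the map $\phi$ of Theorem~\ref{thm3} is built from the one--line word of $\pi$ alone and overwrites its cycle decomposition, while $\bar\psi$ of Theorem~\ref{thm6} retains only $\fix$. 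A genuinely new construction is therefore needed, and the right place to record the number of cycles is the transposition array: since $\cyc~\rho=\fix~T(\rho)$, a bijection of $S_n$ preserves $\cyc$ exactly when it preserves the number of fixed points of the canonical transposition array.

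I would proceed in three stages. First, \emph{reduce to derangements}: on the $\pex$--side this is already Theorem~\ref{thm6}, and for $\des_2$ one verifies the analogous fact --- deleting a fixed point $m$ sitting at position $m$, then rescaling, leaves $\des_2$ unchanged while dropping $\cyc$ by one --- by a short case split (a fixed point is never the top of a $p_2$, and deleting it neither creates nor destroys a $p_2$ elsewhere). It then suffices to build a bijection on $D_n$ carrying $(\des_2,\cyc)$ to $(\pex,\cyc)$, where $\cyc=\pcyc$. Second, \emph{produce a $p_2$--analogue of $\lambda$}: recall that $\lambda:D_n\to T_n^\bullet$ of Theorem~\ref{thm4} realises $\pex~\pi=\fix~\lambda(\pi)$ by labelling the free squares of the graphical representation of $\pi$ relative to its pure excedances. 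I would try to define, in the same spirit, a companion map $\mu:D_n\to T_n^\bullet$ with $\des_2~\pi=\fix~\mu(\pi)$, where the notion of free square is now taken relative to the tops of the $p_2$--occurrences of $\pi$ (the left--to--right maxima immediately followed by a descent), and --- crucially --- arranged so that $\mu$ and $\lambda$ carry each $\cyc$--fibre of $D_n$ onto the same subset of $T_n^\bullet$. Third, \emph{assemble}: the composite $\lambda^{-1}\circ\mu$ is then a bijection of $D_n$ transporting $\des_2$ to $\pex$ and preserving $\cyc$, and extending it by fixed points as in the proof of Theorem~\ref{thm6} produces the desired $\Theta$.

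The main obstacle, I expect, is the second stage together with the compatibility condition in the third: $\lambda$ is already a delicate construction, and a $p_2$--version compatible at the same time with the cycle filtration (equivalently, with the number of fixed points of the transposition array) will demand a very carefully chosen notion of free square --- a careless choice moves $\des_2$ onto $\pex$ but scrambles the cycle count, exactly as the naive composite above does. A complementary and lower--risk route is to refine the recurrences behind Theorems~\ref{thm2} and~\ref{thm4} so as to also track $\cyc$, that is, to determine the two trivariate exponential generating functions $\sum_{n\ge 0}\frac{x^n}{n!}\sum_{\pi\in S_n}y^{\des_2\,\pi}z^{\cyc\,\pi}$ and $\sum_{n\ge 0}\frac{x^n}{n!}\sum_{\pi\in S_n}y^{\pex\,\pi}z^{\cyc\,\pi}$ and show they coincide; a clean closed form, if one exists, would both confirm the conjecture and point to the bijection. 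The difficulty there is structural: $\des_2$ and $\pex$ are one--line statistics whereas $\cyc$ is read off the cycle decomposition, so the recurrence must describe precisely how inserting $n$ into a cycle of a permutation of $S_{n-1}$ perturbs the one--line occurrences of $p_2$ (respectively of $pex$), and matching this bookkeeping on both sides is the heart of the matter. As a preliminary sanity check I would verify the conjectured bivariate equidistribution numerically for small $n$ and look for the resulting triangle in the OEIS.
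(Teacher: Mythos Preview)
The paper does not prove this statement: it is explicitly left as Conjecture~\ref{c1} for future work, so there is no proof to compare against. Your proposal is accordingly a research plan rather than a proof, and the right question is whether the plan is sound.

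It is not, at least not as written. Your first stage --- reducing to derangements on the $\des_2$ side by ``deleting a fixed point $m$ sitting at position $m$, then rescaling, leaves $\des_2$ unchanged'' --- is false. A fixed point is indeed never the \emph{top} of a $p_2$, but it can be the \emph{bottom} of one, and then deleting it may destroy that occurrence without creating a replacement. Concretely, take $\pi=3\,2\,4\,1\in S_4$: the fixed point is at position~$2$, and $\des_2\,\pi=2$ (the tops are $\pi_1=3$ and $\pi_3=4$). Deleting the fixed point and rescaling gives $2\,3\,1\in S_3$, which has $\des_2=1$. So the claimed ``short case split'' does not go through, and with it the reduction to $D_n$ collapses. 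More generally, there is no reason to expect $(\des_2,\fix)$ and $(\pex,\fix)$ to be equidistributed (which your reduction implicitly uses): the paper's $\phi$ in Theorem~\ref{thm3} certainly does not preserve $\fix$, since the fixed points of $\phi(\pi)$ are the entries lying in the blocks $B_j$, not the fixed points of $\pi$.

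Your second and third stages are therefore built on an unstable foundation. The idea of seeking a $p_2$--analogue $\mu$ of $\lambda$ is reasonable in spirit, but you would need a different coupling than ``match on fixed points of the transposition array restricted to derangements''. The alternative generating--function route you sketch is a sensible sanity check and may be the more tractable attack; but as you yourself note, mixing the one--line statistics $\des_2$, $\pex$ with the cycle statistic $\cyc$ is exactly the difficulty, and the paper offers no recurrence tracking $\cyc$ jointly with either. In short: the conjecture remains open, and your plan needs a new first move.
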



\begin{conj}  The two bistatistics  $(\des_2,\des)$ 
and $(\pex,\exc)$ are equidistributed on $S_n$.
\label{c3}
\end{conj}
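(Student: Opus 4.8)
The plan is to promote the equidistribution of $\pex$ and $\des_2$ from Theorem~\ref{thm7} to the claimed joint statement by controlling a second statistic along the same chain of bijections. Observe first that the conjecture is at least internally consistent: every occurrence of $p_2$ is in particular a descent and every pure excedance is in particular an excedance, so both bistatistics are supported on $\{(a,b)\colon a\le b\}$; moreover, by Theorem~\ref{thm7} together with the classical Eulerian symmetry between $\des$ and $\exc$, the two marginals already coincide, so only the correlation between the two coordinates is in question.

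Unwinding the maps of Section~3, the bijection witnessing Theorem~\ref{thm7} is, on derangements, simply the composite $\Phi$ that sends $\pi\in D_n$ to the permutation whose transposition array equals $\lambda(\pi)$, extended to $S_n$ as in Theorem~\ref{thm6}. Since $\lambda$ was tailored only to move $\pex$ onto $\fix$ of the transposition array, hence onto $\cyc$, there is no a priori reason for $\Phi$ to also carry $\exc$ to $\des$, and I expect it does not. The first concrete task is therefore to track the excedances of $\pi$ through the free-square labelling that defines $\lambda$ and through the unfolding of a transposition array into a permutation, and to isolate an explicit statistic on the arrays of $T_n^\bullet$ that reads as $\exc$ on the domain side and as $\des$ of the associated permutation on the codomain side. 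If no such statistic turns out to be compatible with the present $\lambda$, I would then look for a variant of the free-square labelling --- or a different ordering of the transpositions $\langle t_i,i\rangle$ --- that is. Pinning down this statistic, equivalently understanding how the ``excedance content'' of a cycle redistributes when the cycle is unfolded by a Foata-fundamental-type operation, is the heart of the matter.

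Should the bijective route prove too rigid, the fallback is analytic: refine the recursions behind Theorems~\ref{thm2} and~\ref{thm4}. For $(\des_2,\des)$ one reworks the ``append $n$, or append a smaller value after shifting'' construction of Theorem~\ref{thm2}, additionally recording whether the appended entry creates an ordinary descent, so as to obtain a trivariate exponential generating function for $\sum_\pi x^{|\pi|}\,y^{\des_2\,\pi}\,z^{\des\,\pi}/|\pi|!$; for $(\pex,\exc)$ one performs the analogous bookkeeping along the construction underlying Theorem~\ref{thm4} (or directly on $S_n$), tracking excedances as well as pure excedances, and the conjecture follows from the coincidence of the two series. The principal obstacle, in either approach, is structural: the bijection realizing $\pex=\des_2$ is inherently cycle-based, whereas the canonical device realizing $\exc=\des$ is the Foata fundamental transformation, which reorders and reglues cycles in a way that breaks the ``purity'' conditions --- so one must either build a single bijection respecting all four statistics at once, or find a generating-function identity that sidesteps this tension, which is exactly why the statement appears here as a conjecture rather than a theorem.
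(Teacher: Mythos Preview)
The statement you are addressing is Conjecture~\ref{c3}, which the paper explicitly leaves open: there is no proof in the paper to compare against. The authors present it as a problem for future work and even note that the obvious candidate bijection (Foata's fundamental transformation) fails to transport $\pex$ to $\des_2$.

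Your write-up is, correspondingly, not a proof but a research outline. You sketch two possible attacks --- refining the chain $\lambda$/$\phi$ so that it also carries $\exc$ to $\des$, or deriving matching trivariate exponential generating functions by sharpening the recursions of Theorems~\ref{thm2} and~\ref{thm4} --- but you do not carry out either one, and you candidly identify the structural obstruction (the cycle-based construction behind $\pex=\des_2$ is incompatible with the Foata transformation that realises $\exc=\des$). That diagnosis is accurate and matches the paper's own remarks, but it leaves the conjecture exactly where the paper leaves it: unproved. In particular, the key missing step in the bijective route is the existence of any statistic on $T_n^\bullet$ that simultaneously reads $\exc$ on the domain and $\des$ on the codomain of $\lambda$; you assert one should ``look for'' such a statistic or a variant labelling, but give no construction or evidence that one exists. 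Likewise, the analytic route requires actually setting up and solving the refined recursions, which you do not do. So the proposal has a genuine gap: it is a plausible plan, not an argument.
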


It is interesting to remark that $(\des,\cyc)$ 
and $(\exc,\cyc)$ are not equidistributed. Indeed, there are 3 permutations in $S_3$ having $\exc = 1$
and $\cyc = 2$, namely $132$, $213$, $321$, but only 2
permutations with $\des = 1$ and $\cyc = 2$, 
videlicet $132$ and $213$. So, if the Conjectures~\ref{c1} and~\ref{c3} are true then their proofs are probably independent.


\section*{Acknowledgements} 
We would like to greatly thank Vincent Vajnovszki for having offered us Conjecture~\ref{c3} and the anonymous referees  for their helpful comments and suggestions.

\end{document}